\renewcommand{\sgn}{\operatorname{sgn}}
\newcommand{\Tube}{\operatorname{Tube}}
\newcommand{\cJ}{\mathcal{J}}
\newcommand{\corr}{\mathrm{corr}}
\newcommand{\Vol}{\mathrm{Vol}}
\begin{document}

\title[Partial correlation hypersurfaces]{Partial correlation hypersurfaces\\ in Gaussian graphical models}

\author{Jan Draisma}
\address{Universit\"at Bern, Mathematisches Institut,
Sidlerstrasse 5,
3012 Bern, Switzerland; and Eindhoven University of Technology,
Department of Mathematics and Computer Science, P.O.~Box 513, 5600 MB
Eindhoven}
\email{jan.draisma@math.unibe.ch}
\thanks{The author is partially supported by a Vici grant from the Netherlands
Organisation for Scientific Research (NWO)}

\begin{abstract}
We derive a combinatorial sufficient condition for a partial correlation
hypersurface in the parameter space of a directed Gaussian graphical model
to be nonsingular, and speculate on whether this condition can be used in
algorithms for learning the graph. Since the condition is fulfilled in the
case of a complete DAG on any number of vertices, the result implies an
affirmative answer to a question raised by Lin-Uhler-Sturmfels-B\"uhlmann.
\end{abstract}

\maketitle

\section{Introduction} \label{sec:Introduction}

\subsection*{DAGs}
Let $G$ be a directed, acyclic graph (DAG) with vertex set $V$ and edge
set $D \subseteq \{(i,j) \in V^2 \mid i \neq j\}$. We write
$i \to j$ if $(i,j) \in D$ and $i \not \to j$ otherwise. A path in $G$ from $i$ to $j$
of length $k$ is a sequence $(i=i_0,i_1,\ldots,i_k=j)$ with $i_{l} \to
i_{l+1}$ for all $l=0,\ldots,k-1$; we allow $k=0$. If there exists a path
from $i$ to $j$ of length at least $1$ we say that $j$ is below $i$.

\subsection*{Directed Gaussian graphical models} 
We follow \cite[Page 87]{Drton09}. Associated to $G$ is the directed
graphical model for jointly Gaussian random variables $X_i,\ i \in V$
related by
\[ X_j=\sum_{i: i \to j} a_{ij} X_i + \epsilon_j\]
where the vector $\epsilon \sim \mathcal{N}(0,I)$ and where the
$a_{ij} \in \RR$ are the parameters of the model. The
vector $X=(X_j)_{j \in V}$ satisfies
\[ (I-A)^T X=\epsilon \]
where $A$ is the matrix with $(i,j)$-entry $a_{ij}$ if $i
\to j$ and $0$ otherwise. Therefore $X\sim \mathcal{N}(0,\Sigma)$ where
\[ \Sigma=\Sigma(A)=(I-A)^{-T} (I-A)^{-1}. \]
Note that, since $A$ is nilpotent, this is a matrix whose entries
are polynomials in the parameters $a_{ij}, i \to j$. For subsets
$I,J \subseteq V$ we write $\Sigma[I,J]$ for $I \times J$-submatrix
$(\sigma_{ij})_{i \in I,j \in J}$ of $\Sigma$, and we use notation
such as $I+i_0-s:=I \cup \{i_0\} \setminus \{s\}$.

\subsection*{Partial correlation hypersurfaces}
Let $i_0,j_0 \in V$ be distinct and $S \subseteq V \setminus
\{i_0,j_0\}$. In \cite{Uhler14} the {\em partial correlation hypersurface}
$H_f \subseteq \RR^D$ is defined as the zero locus of the
polynomial
\[ f:=\det(\Sigma[S+i_0,S+j_0]); \]
the expression $\corr(i_0,j_0|S):=f/\sqrt{\det(\Sigma[S+i_0,S+i_0])
\det(\Sigma[S+j_0,S+j_0])}$ is the partial correlation of $i_0$ and $j_0$
given $S$.

So the vanishing of $f$ is equivalent to the statement that $i_0,j_0$ are
conditionally independent given $S$. We assume that $f$ is not identically
zero on $\RR^D$.  This is equivalent to the statement that $S$ does not
{\em d-separate} $i_0$ and $j_0$ in $G$ \cite[\S 2.3.4]{Spirtes01};
the trek system expansion of Section~\ref{sec:Background} yields an
equivalent combinatorial characterisation.

The key motivation in \cite{Uhler14} for studying $H_f$ is that the
behaviour for $\lambda \to 0$ of the volume (relative to some probability
measure) of $\Tube(\lambda):=\{a \in \RR^D : |\corr(i_0,j_0,S)| \leq
\lambda\}$ is related to the singularities of $H_f$. This volume scales
linearly with $\lambda$ if $H_f$ is nonsingular but can be superlinear
otherwise---whence the study of the real log-canonical threshold of
$H_f$ in \cite{Uhler14}. The parameter values $a$ in $\Tube(\lambda)$
correspond to probability distributions that are not {\em
$\lambda$-strongly-faithful} to $G$---distributions where
the PC algorithm for learning $G$ might fail. So it is useful to know
criteria for nonsingularity of $H_f$.

\subsection*{Main result}
We will establish the following criterion for nonsingularity of $H_f$;
the same applies when the $\epsilon_i$ have unequal variances 
(Proposition~\ref{prop:unequal}).

\begin{thm}
Assume that $i_0 \to j_0$ and that for all $s \in S$ below $j_0$ we have
$i_0 \to s$. Then $H_f$ is nonsingular.
\end{thm}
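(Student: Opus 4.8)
The plan is to show that the gradient of $f$ is nowhere zero on $H_f$, i.e.\ that $f$ and all its first partials $\partial f/\partial a_{pq}$ have no common real zero. Two structural facts drive everything. First, $\det(I-A)=1$, so $\Sigma(A)=(I-A)^{-T}(I-A)^{-1}$ is positive definite for every $A\in\RR^D$; hence every principal minor $\det\Sigma[U,U]$ is strictly positive on all of $\RR^D$, and it is this positivity that will furnish the final contradiction. Second, writing $G_0$ for the DAG obtained from $G$ by deleting the edge $i_0\to j_0$ and setting $t:=a_{i_0j_0}$, the path expansion of $(I-A)^{-1}$ gives $X_v = X_v^{(0)} + t\,M_{j_0v}X_{i_0}$ for every $v$, where $X^{(0)}$ and $M$ refer to $G_0$ and $M_{j_0i_0}=0$ because $i_0$ is not below $j_0$.

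Using this I would first prove that $f$ is affine-linear in $t$. In the matrix $\Sigma[S+i_0,S+j_0]$ the $t$-contribution to row $p$ is $t\,M_{j_0p}$ times the $i_0$-row (which lies in the matrix since $i_0\in S+i_0$); subtracting these multiples of the $i_0$-row turns the $t$-dependence into a single rank-one update $t\,u\beta^\top$, with $\beta_q=M_{j_0q}$ and $u$ the $i_0$-column of $\Sigma^{(0)}$. The matrix-determinant lemma then yields $f = \det\Sigma^{(0)}[S+i_0,S+j_0] + t\cdot g$ with $g=\partial f/\partial t$, and a bordered-determinant/Schur computation identifies the leading coefficient as $g = \pm\det\Sigma^{(0)}[S+i_0,S+i_0]\cdot\kappa$, where $\kappa=\mathrm{Cov}(\epsilon_{j_0},X^{(0)}_{j_0}\mid X^{(0)}_{S+i_0})$ is a conditional covariance in $G_0$. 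When no $s\in S$ lies below $j_0$ this already finishes the proof in one stroke: then $\epsilon_{j_0}$ is independent of $X^{(0)}_{S+i_0}$, so $\kappa=\mathrm{Var}(\epsilon_{j_0})=1$ and $\partial f/\partial t=\pm\det\Sigma^{(0)}[S+i_0,S+i_0]$ is nonzero everywhere, whence $\nabla f\neq0$.

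The substance of the theorem is the general case, and this is where the hypothesis enters: differentiating with respect to $t$ alone no longer suffices, because $\kappa$ can vanish once some $s\in T:=\{s\in S:\ s\text{ below }j_0\}$ is conditioned on (a collider through the non-$i_0$ parents of $j_0$ can force $\kappa=0$). The hypothesis supplies, for each such $s$, the edge $i_0\to s$ and hence an additional coordinate $a_{i_0s}$ to differentiate along. I would compute each $\partial f/\partial a_{i_0s}$ by the same row-reduction/bordered-determinant recipe, obtaining analogous (positive minor)$\times$(conditional covariance) expressions, and then aim to show that $f=0$ together with the simultaneous vanishing of $\partial f/\partial t$ and of all $\partial f/\partial a_{i_0s}$, $s\in T$, is impossible over $\RR$.

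The hard part is this endgame. The plan is to translate these equations, via the trek-system expansion of Section~\ref{sec:Background}, into linear relations among conditional covariances in $G_0$, and then to collapse them—using that conditioning on the fan of children $\{j_0\}\cup T$ of $i_0$ decouples the relevant residuals—into a single identity asserting that a strictly positive quantity (a principal minor of $\Sigma^{(0)}$, or a sum of a squared norm and $\mathrm{Var}(\epsilon_{j_0})$) equals zero. In the smallest obstructive configuration the two equations $\partial f/\partial a_{i_0j_0}=0$ and $\partial f/\partial a_{i_0s}=0$ already force a relation of the shape $\mu^2+\nu^2+1=0$, the positive-definiteness contradiction in miniature. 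Carrying out this bookkeeping for arbitrary $S$ and $T$—verifying that the hypothesis $i_0\to s$ is exactly what makes the fan directions available and the collapse valid—is the main obstacle, and is where I expect the trek-system combinatorics to do the real work.
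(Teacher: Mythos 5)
Your setup is sound, and its first half essentially reproduces the paper's: linearity of $f$ in $a_{i_0j_0}$, an expression of $\partial f/\partial a_{i_0j_0}$ as a bordered determinant (this is the content of the paper's Lemma~\ref{lm:i0j0}), and positive definiteness of principal minors of $\Sigma$ as the source of the final contradiction; your product formula $\partial f/\partial a_{i_0j_0}=\pm\det\Sigma^{(0)}[S+i_0,S+i_0]\cdot\kappa$ checks out, and the special case you complete (no $s\in S$ below $j_0$) is correct. But the theorem's content is exactly the remaining case, and there your argument stops at a plan: you would ``translate the equations into linear relations among conditional covariances'' and ``collapse'' them, and you yourself flag this bookkeeping as the main obstacle. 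That unperformed step is the actual theorem, so as it stands the proposal proves only the easy case.

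Concretely, what is missing is the paper's Lemma~\ref{lm:id}: a sign-reversing tail-swapping involution on trek systems $S+i_0\to S+i_0$ proving the polynomial identity
\[ g \;=\; \sum_{s\in S\text{ below }j_0} \pm\, p_{j_0,s}\,\det\Sigma[S+i_0,S+i_0-s+j_0], \]
where $\partial f/\partial a_{i_0j_0}=\pm(\det\Sigma[S+i_0,S+i_0]-g)$ and where, by the hypothesis $i_0\to s$ and the analogue of your rank-one computation for the edge $i_0\to s$ (the paper's Lemma~\ref{lm:i0s}, which needs the preliminary reduction of Lemma~\ref{lm:sj} deleting all arrows out of $S$ --- a reduction you never make), each determinant on the right is $\pm\,\partial f/\partial a_{i_0s}$. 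Hence $\det\Sigma[S+i_0,S+i_0]$ is a polynomial combination of the partials $\partial f/\partial a_{i_0j_0}$ and $\partial f/\partial a_{i_0s}$, so these partials alone have no common real zero; note that $f=0$ is never needed, whereas your endgame invokes it. Your bordered determinant is in fact one row expansion away from this identity: expanding $\det\Sigma^{(0)}[S+i_0,S+i_0]\cdot\kappa$ along its $\epsilon_{j_0}$-row yields precisely $\det\Sigma^{(0)}[S+i_0,S+i_0]$ plus the terms $\pm\,p_{j_0,s}\det\Sigma^{(0)}[S+i_0,S+i_0-s+j_0]$. What remains --- and what your heuristic that ``conditioning on the fan of children decouples the residuals'' does not supply --- is the identification of each of those minors with a partial derivative of $f$, including the bookkeeping between $G$ and the various edge-deleted graphs in which your several bordered determinants live. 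That identification is where the combinatorics (the paper does it with trek systems and the involution above) must do the real work, and it is absent from the proposal.
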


\begin{cor}
If $G$ is the DAG on $\{1,\ldots,n\}$ with $i \to j$ if and only if $i<j$, then $H_f$
is nonsingular, independently of the choice of $i_0,j_0,S$.
\end{cor}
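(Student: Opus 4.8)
The plan is to derive the Corollary directly from the Theorem by checking that the Theorem's two hypotheses hold for every admissible choice of $i_0,j_0,S$ in the complete DAG. The first thing I would record is that the defining polynomial $f=\det(\Sigma[S+i_0,S+j_0])$ is unchanged when the roles of $i_0$ and $j_0$ are swapped: ordering all index sets increasingly and using that $\Sigma$ is symmetric, we have $\Sigma[S+j_0,S+i_0]=(\Sigma[S+i_0,S+j_0])^T$, and a matrix and its transpose have equal determinant. Hence $H_f$, and in particular its (non)singularity, depends only on the unordered pair $\{i_0,j_0\}$ and not on the ordering in which we feed them to the construction. I may therefore relabel so that $i_0<j_0$, which in this DAG means exactly $i_0\to j_0$, the first hypothesis of the Theorem.

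Next I would verify the second hypothesis. In the DAG with $i\to j$ iff $i<j$, every edge strictly increases the vertex label, so a path of positive length from $j_0$ to a vertex $s$ exists precisely when $j_0<s$; the direct edge $j_0\to s$ already witnesses sufficiency, and necessity is immediate from the total order. Thus $s\in S$ is below $j_0$ exactly when $s>j_0$. Combined with $i_0<j_0$ this forces $i_0<j_0<s$, whence $i_0\to s$. Every $s\in S$ below $j_0$ therefore satisfies $i_0\to s$, which is precisely the remaining hypothesis of the Theorem.

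With both hypotheses confirmed, the Theorem applies verbatim and yields that $H_f$ is nonsingular. I do not anticipate a genuine obstacle here: the only point requiring real care is the symmetry reduction, which must be justified \emph{before} one can assume $i_0\to j_0$, since without it the pairs with $i_0>j_0$ would formally fall outside the Theorem's scope even though the underlying hypersurface is identical. Everything else is a routine unwinding of the fact that ``below'' coincides with ``larger label'' in the total order. Because this argument is uniform over all choices of $i_0,j_0,S$, it also substantiates the remark in the abstract that the complete DAG furnishes an affirmative answer to the question of Lin--Uhler--Sturmfels--B\"uhlmann.
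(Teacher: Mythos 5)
Your proposal is correct and is exactly the intended derivation: the paper states the corollary without a separate proof because it follows from the theorem precisely as you argue, by relabeling so that $i_0<j_0$ (justified by $\det\Sigma[S+j_0,S+i_0]=\det(\Sigma[S+i_0,S+j_0])^T$) and observing that in the complete DAG any $s$ below $j_0$ satisfies $s>j_0>i_0$, hence $i_0\to s$. Your explicit attention to the symmetry reduction, which the paper leaves implicit, is a worthwhile point of care rather than a deviation.
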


For $n \leq 6$ this is \cite[Theorem 4.1]{Uhler14}, which was established
there by extensive computer calculations showing that some power of
$\det \Sigma[S+i_0+j_0,S+i_0+j_0]$ lies in the ideal generated by $f$
and its partial derivatives. Since $\Sigma[S+i_0+j_0,S+i_0+j_0]$ is
positive definite and hence has a nonzero determinant for all (real)
values of the parameters, this shows that the (real) 
common vanishing locus of $f$ and its derivatives is empty.

We will follow a similar approach, except
that we consider the principal submatrix $\det \Sigma[S+i_0,S+i_0]$,
no power is needed, and indeed not $f$ but only some of its partial
derivatives are needed.

\subsection*{Organisation}

In Section~\ref{sec:Background} we review the expansion of subdeterminants
of $\Sigma$ in terms of trek systems without sided intersection
\cite{Draisma09d}. In Section~\ref{sec:Proof} we use this to prove
the theorem, and we conclude with a brief discussion in 
Section~\ref{sec:Open}.

\section{Background} \label{sec:Background}

\subsection*{The trek rule}
We recall results from~\cite{Draisma09d}.  Suppose we allow the variances
of the $\epsilon_i$ to be distinct, rather than all equal to $1$ as
above. In that case, the covariance matrix $\Sigma$ becomes
\[ \Sigma=(I-A)^{-T} \Omega (I-A)^{-1} \]
where $\Omega$ is the diagonal matrix with the covariances of the
$\epsilon_i$ on the diagonal. Using the geometric series for $(I-A)^{-1}$
we find that
\[ \sigma_{ij}=\sum_{t:i \to j} w(t) \]
where the sum is over all {\em treks} from $i$ to $j$ as in
the following definition.

\begin{de}
A {\em trek} $t$ in $G$ is a pair $(P_U,P_D)$ of paths in $G$ that
start at the same vertex $m$, the {\em top} of the trek. The paths
$P_U,P_D$ are called the {\em up part} and the {\em down part} of $t$,
respectively. If $i_0$ is the last vertex of $P_U$ and $j_0$ is the last
vertex of $P_D$, then we call $t$ is a trek {\em from $i_0$ to $j_0$},
$i_0$ the {\em starting} vertex of $t$, and $j_0$ the {\em end} vertex
of $t$. The {\em weight} of $t$ equals
\[ w(t):= \left(\prod_{(i,j) \text{ in } P_U} a_{ij} \right) \cdot \omega_m \cdot
\left(\prod_{(i,j) \text{ in } P_D} a_{ij} \right). \]
We allow one or both of $P_U,P_D$ to have length $0$, in which case the
corresponding factor(s) above is (are) $1$.
\end{de}

The terminology derives from an informal interpretation of a trek as
traversing $P_U$ upwards from $i_0$ (i.e., against the direction of
its edges in $G$) and then traversing $P_D$ downwards to $j_0$. In
slightly different terms, the {\em trek rule} above goes back at least
to \cite{Wright34}.

\subsection*{Trek system expansion}
Equip $V$ with an arbitrary linear order. Then for $I,J \subseteq V$
of equal cardinality and $\pi:I \to J$ we define $\sgn(\pi)$ as $(-1)$
to the power the number of {\em crossings}: pairs $(i_1,i_2) \in I^2$
with $i_1<i_2$ but $\pi(i_1)>\pi(i_2)$.

\begin{de}
Let $I,J \subseteq V$ with $|I|=|J|=k$. A {\em trek system} $T$ from $I$
to $J$ is a set of treks $\{t_1,\ldots,t_k\}$ such that $I$ is precisely
the set of starting vertices of the $t_l$ and $J$ is precisely the set
of end vertices of the $t_l$. We write $T:I \to J$.  The map $\pi:I \to
J$ that sends the starting vertex of each trek to its end vertex is a
bijection, and we define the sign of $T$ as $\sgn(T):=\sgn(\pi)$. The
weight of $T$ is $w(T):=\prod_{l=1}^k w(t_l)$.
\end{de}

\begin{de}
A {\em sided intersection} between treks $t$ and $t'$ is a vertex where
either the up parts of $t$ and $t'$ meet or the down parts of $t$ and $t'$
meet. We say that a trek system has no sided intersections if there is
no sided intersection between any two of its treks.
\end{de}

We have the following formula for subdeterminants of $\Sigma$.

\begin{prop}[\cite{Draisma09d}]
For $I,J \subseteq V$ of the same cardinality we have
\begin{equation} \det \Sigma[I,J]=\sum_{T:I \to J \text{ without sided
intersections}} \sgn(T) \wt(T).  \label{eq:nosided} \tag{*}
\end{equation}
\end{prop}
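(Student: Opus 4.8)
The plan is to begin from the Leibniz expansion of the subdeterminant, substitute the trek rule term by term, and then remove, via a sign-reversing involution, exactly those trek systems that possess a sided intersection.

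First I would write $I=\{i_1<\cdots<i_k\}$ and $J=\{j_1<\cdots<j_k\}$ and expand
\[ \det \Sigma[I,J]=\sum_{\pi:I \to J} \sgn(\pi) \prod_{i \in I} \sigma_{i,\pi(i)}, \]
where the sum runs over all bijections $\pi$ and $\sgn(\pi)$ is the crossing sign defined above, which agrees with the usual permutation sign under the order-preserving identifications of $I$ and $J$ with $\{1,\dots,k\}$. Substituting the trek rule $\sigma_{i,\pi(i)}=\sum_{t:i \to \pi(i)} w(t)$ and multiplying out the product, each term becomes a sum over all ways of choosing one trek $t_i:i \to \pi(i)$ for every $i \in I$; such a choice is precisely a trek system $T:I \to J$ whose associated bijection is $\pi$, and $\prod_{i} w(t_i)=w(T)$. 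Collecting terms I obtain
\[ \det \Sigma[I,J]=\sum_{T:I \to J} \sgn(T)\,w(T), \]
now summed over \emph{all} trek systems, including those with sided intersections. This step is routine once the sign conventions are matched.

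The heart of the argument is to show that the trek systems with at least one sided intersection contribute zero in total, leaving exactly the sum in \eqref{eq:nosided}. For this I would build a weight-preserving, sign-reversing involution $\iota$ on the set of trek systems having a sided intersection. Given such a $T$, I would use the fixed linear order on $V$ (together with a fixed order on the treks, e.g.\ by starting vertex) to single out canonically a \emph{first} sided intersection: a vertex $v$ together with the two treks $t_l,t_{l'}$ whose up parts, or whose down parts, meet at $v$. If it is an up-side meeting, I swap the two up-path tails running from $v$ down to the starting vertices $i_l,i_{l'}$, while leaving the tops, the down parts, and the portions of the up paths above $v$ untouched; a down-side meeting is handled symmetrically by swapping the down-path tails to $j_l,j_{l'}$. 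Because the DAG has no directed cycles, each concatenated walk is again a genuine path, and the swap reuses exactly the same edges and the same tops, so $w(\iota(T))=w(T)$. Moreover it exchanges the starting (respectively end) vertices of the two treks, composing $\pi$ with a transposition and hence flipping $\sgn(T)$.

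The step I expect to be the main obstacle is verifying that $\iota$ is genuinely an involution, that is, $\iota(\iota(T))=T$. This demands that the canonical choice of first sided intersection be invariant under the swap: the operation must not create a sided intersection that is earlier in the chosen ordering, and it must return the same vertex $v$ and the same unordered pair of treks when applied to $\iota(T)$. Designing the selection rule so as to guarantee this—essentially the Lindstr\"om--Gessel--Viennot bookkeeping adapted to the two-sided structure of treks—is the delicate part. Once it is in place, the contributions of $T$ and $\iota(T)$ cancel, the fixed points of $\iota$ are precisely the trek systems without sided intersection, and \eqref{eq:nosided} follows.
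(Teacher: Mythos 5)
Your overall route---Leibniz expansion, substitution of the trek rule, then a weight-preserving, sign-reversing tail-swap involution killing the trek systems with a sided intersection---is precisely the argument the paper points to (the paper itself gives no details: it cites \cite{Draisma09d} and describes the proof as ``tail swapping as in the classical Lindstr\"om--Gessel--Viennot Lemma'' \cite{Gessel85}). Your first half is correct and, as you say, routine, and your observations that acyclicity makes the concatenated walks genuine paths, that the swap preserves weight, and that it flips the sign are all right. However, the proposal stops exactly where the content of the statement lies: you never actually specify the canonical selection rule, and you explicitly defer the verification that $\iota\circ\iota=\mathrm{id}$. Since the entire point of the proposition is that the sided-intersection terms cancel, what you have written is a correct plan with an acknowledged hole at its crux, not yet a proof.

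The hole is fillable along the lines you anticipate, provided the two sides are handled in a fixed order rather than merged into one ``first intersection'' rule. For instance: if some two up parts share a vertex, let $i$ be the smallest starting vertex whose trek's up part meets another trek's up part, let $v$ be the first such meeting point encountered when traversing that up part from $i$ towards its top, let $i'$ be the smallest starting vertex other than $i$ whose trek's up part passes through $v$, and swap the two segments between $v$ and the starting vertices; only when no two up parts meet anywhere, apply the mirror-image rule to down parts, indexing treks by end vertices. Three checks then give the involution property: (a) an up-swap leaves all down parts untouched and preserves the union of the vertex sets of the two up parts involved, so the up-versus-down case distinction and the minimality of $i$ survive the swap; (b) acyclicity forbids a single up part from revisiting a vertex (a revisit would close a directed cycle), which is what rules out a new meeting point appearing on the segment between $i$ and $v$---this is where the DAG hypothesis enters the bookkeeping, and it deserves to be said explicitly; (c) the set of starting vertices of treks whose up parts pass through $v$ is unchanged, so $\iota(T)$ selects the same triple $(i,v,i')$ and the second swap restores $T$. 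Alternatively, you can avoid designing any involution at all, which is essentially what \cite{Draisma09d} does: by Cauchy--Binet, $\det\Sigma[I,J]=\sum_{S}\det((I-A)^{-1}[S,I])\cdot\prod_{m\in S}\omega_m\cdot\det((I-A)^{-1}[S,J])$, where $S$ runs over subsets of $V$ with $|S|=|I|$; applying the classical Lindstr\"om--Gessel--Viennot lemma to the two determinants expresses them as signed sums over vertex-disjoint path systems from $S$ to $I$ and from $S$ to $J$, and a pair of such systems is exactly a trek system without sided intersection having set of tops $S$. That reduction buys you the cancellation for free from the classical lemma, at the price of the Cauchy--Binet summation; your direct involution, once completed as above, is a legitimate self-contained alternative.
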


The proof is an application of tail swapping as in the classical
Lindstr\"om-Gessel-Viennot Lemma \cite{Gessel85}. We will see
another instance of tail swapping in Section~\ref{sec:Proof}.
In \cite{Draisma09d} the proposition is used to give
a combinatorial criterion, generalising d-sepa\-ra\-tion, for the
determinant to be identically zero on $\RR^D \times \RR_{>0}^V$. Furthermore,
in \cite{Draisma12h} it is shown that the sum above is cancellation-free:
if two trek systems $I \to J$ have the same weight, then they have the
same sign. Moreover, it is shown there that the coefficient of each
monomial is plus or minus a power of $2$.

All of these results---the formula \eqref{eq:nosided} of
course, but also the cancellation-freeness and the power-of-two
phenomenon---persist when we specialise $\Omega$ to the identity matrix,
as we did in Section~\ref{sec:Introduction} and as we do again in
Section~\ref{sec:Proof}. Indeed, if $T: I \to J$ is a trek system without
sided interaction, then the tops of the treks in $T$ can be recovered
from the specialisation of $w(T)$ as follows: $m$ is a top if and only if
either 
\begin{enumerate}
\item at least one $a_{mj}$ appears in $w(T)$ and no $a_{im}$ appears
in $w(T)$; or else 
\item $m \in I \cap J$ and $w(T)$ contains no $a_{mj}$
and no $a_{im}$ (then some trek is $((m),(m))$). 
\end{enumerate}

\subsection*{Action by diagonal matrices}
Let $d=\diag((d_i)_{i \in V})$ where the $d_i$ are in $\RR_{>0}$. Then
\[ d \Sigma d=
(d(I-A)^{-T}d^{-1}) \cdot (d \Omega d) \cdot (d^{-1}(I-A)^{-1}d)=
(I-A')^{-T} \Omega' (I-A')^{-1} \]
where $\Omega'=d \Omega d$ and where $A'=d^{-1}Ad$ has the same zero
pattern as $A$. Hence, the group $(\RR_{>0})^V$ acts on the parameter
space $\RR^D \times \RR_{>0}^V$ and on the space of covariance matrices in such a manner that
the map $(a,\omega) \mapsto \Sigma$ is equivariant. This implies that
for any $I, J \subseteq V$ of equal cardinality the hypersurface in
$\RR^D \times \RR_{>0}^V$ defined by $\det \Sigma[I,J]=0$ is stable
under this action. 

Alternatively, this can be read off from \eqref{eq:nosided}:
scaling each $a_{ij}$ with $d_i^{-1} d_j$ and $\omega_m$ with $d_m^2$,
the weight of each trek from a vertex $i \in I$ to a vertex $j \in J$
gets scaled by $d_i d_j$, and therefore $\det \Sigma[I,J]$ scales
with $\left(\prod_{i \in I} d_i \right) \cdot \left( \prod_{j \in J}
d_j \right)$.

Define $f_{\Omega}:=\det \Sigma[I,J]$ and let $f$ be obtained from
$f_{\Omega}$ by specialising $\Omega$ to the identity matrix. Let $H_f$
be the hypersurface in $\RR^D$ defined by $f$ and let $H_{f_\Omega}$
be the hypersurface defined by $f_\Omega$ in $\RR^D \times \RR_{>0}^V$.

\begin{prop} \label{prop:unequal}
As real algebraic varieties, $H_{f_\Omega}$ is isomorphic to $H_f \times
\RR_{>0}^V$. In particular, $H_{f_\Omega}$ is nonsingular if and only
if $H_f$ is.
\end{prop}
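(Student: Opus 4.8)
The plan is to trivialise the hypersurface $H_{f_\Omega}$ over the $\omega$-directions by means of the $(\RR_{>0})^V$-action introduced above, which scales $f_\Omega$ by a strictly positive factor and hence preserves $H_{f_\Omega}$. The crucial feature of that action is that on the $\omega$-coordinates it is free and acts transitively fibrewise ($\omega_m\mapsto d_m^2\,\omega_m$), so that the slice where all $\omega_m$ equal $1$, on which $f_\Omega$ restricts by definition to $f$, meets every orbit in exactly one point. This strongly suggests realising $H_{f_\Omega}$ as the union of the orbits through $H_f\times\{(1,\dots,1)\}$, with the orbit parameter supplying the $\RR_{>0}^V$-factor.

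Concretely, I would write the second factor of $H_f\times\RR_{>0}^V$ with coordinate $d=(d_m)_{m\in V}$ and define
\[ \Phi\colon H_f\times\RR_{>0}^V\longrightarrow \RR^D\times\RR_{>0}^V,\qquad \Phi(a,d)=\big((d_i^{-1}d_j\,a_{ij})_{i\to j},\,(d_m^2)_{m\in V}\big), \]
that is, the point obtained by acting with $\diag(d)$ on $(a,(1,\dots,1))$. By the scaling rule established above, $f_\Omega(\Phi(a,d))=\big(\prod_{i\in I}d_i\big)\big(\prod_{j\in J}d_j\big)\,f(a)$ as an identity of functions on $\RR^D\times\RR_{>0}^V$; since the prefactor is positive, $\Phi$ maps $H_f\times\RR_{>0}^V$ into $H_{f_\Omega}$. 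It is moreover a bijection onto $H_{f_\Omega}$: given $(b,\omega)\in H_{f_\Omega}$ the unique preimage is $d_m=\sqrt{\omega_m}$, $a_{ij}=d_id_j^{-1}b_{ij}$, and the same identity gives $f(a)=0$. Note that $\Phi$ itself is given by Laurent-polynomial, hence regular, formulae.

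The one point that needs care, and the place where the phrase \emph{isomorphism of real algebraic varieties} must be read in the semialgebraic (Nash) sense, is that the inverse of $\Phi$ involves $d_m=\sqrt{\omega_m}$, which is not a regular function on $\RR_{>0}$. I expect this to be the genuine obstacle rather than an artefact of the chosen map: the telescoping weight of a trek forces any rescaling $a_{ij}\mapsto r_{ij}\,a_{ij}$ that absorbs the $\omega$-dependence to have the form $r_{ij}=s_i/s_j$ with $s_m=\omega_m^{-1/2}$, so no Laurent-monomial rescaling can do it. Thus $\Phi$ is a regular bijection whose inverse is real-analytic but only Nash, in particular a diffeomorphism of the underlying semialgebraic sets, and this is exactly enough for the stated consequence. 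Writing $c(d)=\prod_{i\in I}d_i\prod_{j\in J}d_j>0$, the identity $f_\Omega\circ\Phi=c\cdot f$ together with the product rule gives, at any point $(a,d)$ with $f(a)=0$, that $d(f_\Omega\circ\Phi)=c\,df$ (the $f\,dc$ term vanishes); since $\Phi$ is a diffeomorphism and $c>0$, the differential of $f_\Omega$ vanishes at $\Phi(a,d)$ if and only if $df$ vanishes at $a$. As $H_f\times\RR_{>0}^V$ is smooth at $(a,d)$ precisely when $H_f$ is smooth at $a$, this yields that $H_{f_\Omega}$ is nonsingular if and only if $H_f$ is, establishing the final assertion.
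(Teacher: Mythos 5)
Your proof is correct and follows essentially the same route as the paper: the paper's proof consists of exactly your map $\Phi(a,d)=\bigl((a_{ij}\,d_j/d_i)_{i\to j},(d_m^2)_m\bigr)$ together with the square-root inverse $(a',\omega)\mapsto\bigl((a'_{ij}\sqrt{\omega_i}/\sqrt{\omega_j})_{i\to j},(\sqrt{\omega_m})_m\bigr)$, which it simply asserts are both morphisms of real algebraic varieties. The additional care you take---observing that $\sqrt{\omega_m}$ is not a regular function, so the isomorphism is most safely read in the Nash/semialgebraic (or real-analytic) category, and spelling out via $f_\Omega\circ\Phi=c\cdot f$ with $c>0$ and the chain rule that singular points correspond under $\Phi$---is a legitimate tightening of a point the paper glosses over, not a different argument.
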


\begin{proof}
By the discussion above, the map
\[ (a,d) \mapsto \left((a_{ij} \cdot \frac{d_j}{d_i})_{i \to j},
(d_m^2)_m\right) \]
maps $H_f \times \RR_{>0}^V$ into $H_{f_\Omega}$. The inverse is given by 
\[ (a',\omega) \mapsto \left((a'_{ij} \cdot
\frac{\sqrt{\omega_i}}{\sqrt{\omega_j}})_{i \to j},
(\sqrt{\omega_m})_m \right). \]
Both maps are morphisms of real algebraic varieties.
\end{proof}

\section{Proof of the theorem} \label{sec:Proof}

We retain the notation of Section~\ref{sec:Introduction}; in particular,
$\epsilon \sim \mathcal{N}(0,I)$, $f=\det \Sigma[S+i_0,S+j_0]$ and $H_f
\subseteq \RR^D$ is the hypersurface defined by $f$. In this section,
we treat the $a_{ij}$ as variables and our computations take place in the
polynomial ring $\RR[a_{ij} \mid (i,j) \in D]$. Let $\cJ$ be the ideal in
this ring generated by all partial derivatives $\pafg[f]{a_{ij}}$ of $f$.

\begin{lm} \label{lm:sj}
For $s \in S$ and $j \in V$ with $s \to j$ the variable $a_{sj}$ does
not appear in $f$.
\end{lm}

\begin{proof}
Let $T:S+i_0 \to S+j_0$ be a trek system without sided intersection.
If the arrow $s \to j$ were used in the up (respectively, down)
part of some trek $t$ in $T$, then $t$ would have a sided intersection
with the trek starting (respectively, ending) at $s$. So that arrow is not 
used and the conclusion follows from \eqref{eq:nosided}.
\end{proof}

As a consequence, in the remaining discussion we may and will replace $D$
by $D \setminus S \times V$, so that {\em $G$ has no arrows going out
of $S$}.

\begin{lm} \label{lm:i0s}
Suppose that $G$ has no outgoing arrows from elements of $S$.
For $s \in S$ with $i_0 \to s$ the variable $a_{i_0s}$ appears
at most linearly in $f$ and its coefficient equals $\pm \det
\Sigma[S+i_0,S+j_0-s+i_0]$. In particular, $\det \Sigma[S+i_0,S+j_0-s+i_0]
\in \cJ$.
\end{lm}

\begin{proof}
If a trek $t$ in a trek system $T:S+i_0 \to S+j_0$ without sided
intersection uses the edge $i_0 \to s$, then it does so in its down
part---indeed, in its up part it would yield a sided intersection with
the trek starting at $i_0$.

In particular, the variable $a_{i_0s}$ appears only linearly in $f$. 
Furthermore, $t$ ends in $s$, or else $t$ would have a sided intersection
with the trek ending at $s$. So if we remove from $t$ the arrow $i_0
\to s$, then we obtain a trek system $T':S+i_0 \to S+j_0-s+i_0$ without
sided intersection (Figure~\ref{fig:i0s}).

\begin{figure}
\begin{center}
\includegraphics[scale=.7]{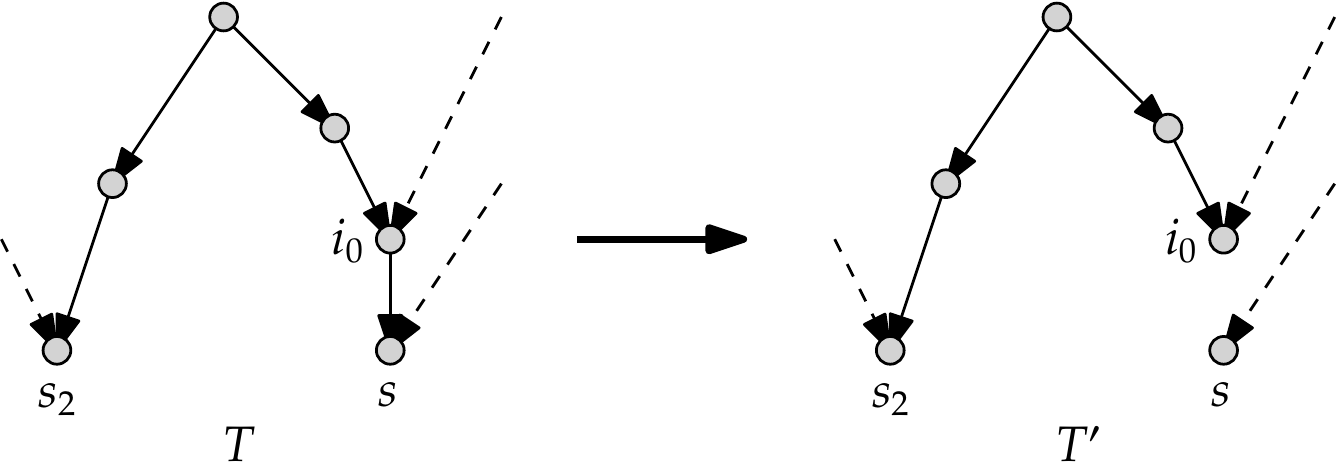}
\end{center}
\caption{Proof of Lemma~\ref{lm:i0s}. We suggestively draw the
arrows in up parts of treks as pointing in the south-west direction
and arrows in down parts as pointing in the south-east direction---of
course, this is not always possible!}
\label{fig:i0s}
\end{figure}

Conversely, if we have any trek system $T':S+i_0 \to S+j_0-s+i_0$ without
sided intersection, then no trek in it passes through $s$ on its way down,
because $s$ has no outgoing arrows. Hence, adding the arrow $i_0 \to s$
to the trek $t'$ in $T'$ ending in $i_0$ yields a trek system $S+i_0
\to S+j_0$ without sided intersection.

Hence the map $T \mapsto T'$ gives a bijection between the terms in (the
trek system expansion of) $f$ divisible by $a_{i_0s}$ and the terms in
$\det \Sigma[S+i_0,S+j_0+i_0-s]$. Furthermore, $\sgn(T)$ equals $\pm
\sgn(T')$, where the sign is the sign of the bijection $S+j_0-s+i_0
\to S+j_0$ that is the identity on $S+j_0-s$ and sends $i_0$ to $s$;
in particular, this sign does not depend on $T$.  \end{proof}

\begin{lm} \label{lm:i0j0}
Assume that $i_0 \to j_0$. The variable $a_{i_0j_0}$ appears
at most linearly in $f$ and its coefficient equals $\pm(\det \Sigma[S+i_0,S+i_0]-g)$
where 
\begin{equation} 
g=\sum_{T'':S+i_0 \to S+i_0} \sgn(T'') w(T'')
\label{eq:i0j0} \tag{**}
\end{equation}
is the sum over all trek systems $T'':S+i_0 \to S+i_0$ without sided
intersection {\em of which one trek contains $j_0$ in its
down part}. In particular, $\det \Sigma[S+i_0,S+i_0] - g \in
\cJ$.
\end{lm}

\begin{proof}
If a trek $t$ in a trek system $T:S+i_0 \to S+j_0$ without sided
intersection uses the edge $i_0 \to j_0$, then it does so on its way
down: on its way up it would yield a sided intersection with the trek
starting at $i_0$. In particular, the variable $a_{i_0j_0}$
appears only linearly in $f$. 

Furthermore, $t$ ends in $j_0$, or else it would have a sided intersection
with the trek ending at $j_0$. So if we remove from $t$ the arrow $i_0
\to j_0$, then we obtain a trek system $T'':S+i_0 \to S+i_0$ without sided
intersection (Figure~\ref{fig:i0j0}). Also, $\sgn(T)$ equals $\sgn(T'')$
times the sign of the bijection $S+i_0 \to S+j_0$ that is the identity
on $S$ and maps $i_0$ to $j_0$; this will determines the sign $\pm$
in the lemma. 

\begin{figure}
\begin{center}
\includegraphics[scale=.7]{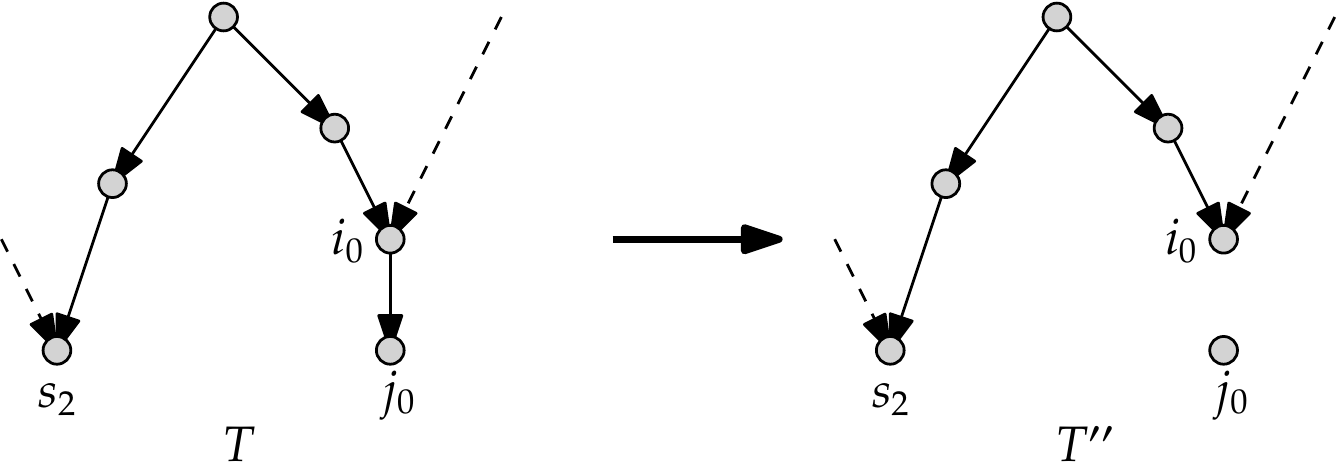}
\end{center}
\caption{Proof of Lemma~\ref{lm:i0j0}.}
\label{fig:i0j0}
\end{figure}

Conversely, given a trek system $T'':S+i_0 \to S+i_0$ without
sided intersection, we may try and add the arrow $i_0 \to j_0$
to the trek ending in $i_0$. The resulting trek system has no sided
intersection if and only if no trek of $T''$ passes $j_0$ on its way
down. The remaining $T''$ must be therefore be subtracted as in the lemma.
\end{proof}

For $s \in S$ under $j_0$ define 
\[ p_{j_0,s}:=\sum_{P:j_0 \to s} w(P), \]
the sum of the weights of all directed paths in $G$ from $j_0$ to $s$.

\begin{lm} \label{lm:id}
The element $g$ from \eqref{eq:i0j0} satisfies
\[ g=\sum_{s \in S \text{ under } j_0} \sgn(\pi_s) \det \Sigma[S+i_0,S+i_0-s+j_0]
\cdot p_{j_0,s} \]
where $\pi_s:S+i_0-s+j_0 \to S+i_0$ is the
identity on $S+i_0-s$ and sends $j_0$ to $s$.
\end{lm}

\begin{proof}
Let $T':S+i_0 \to S+i_0-s+j_0$ be a trek system without sided intersection
and let $t'$ be the trek of $T'$ ending in $j_0$. Appending to $t'$ any
path from $j_0$ down to $s$ yields a trek system $T'':S+i_0 \to S+i_0$
with sign $\sgn(T'')=\sgn(T') \sgn(\pi_s)$. In this manner, precisely 
those trek systems $T'':S+i_0  \to S+i_0$ arise for which 
\begin{enumerate}
\item a unique trek $t''$ of $T''$ passes $j_0$ on its way down, and 
\item every sided intersection of $T''$ is between $t''$ and some other
trek of $T''$ on their way down, and happens at a vertex below $j_0$. 
\end{enumerate}
So the left-hand side of the equation in the lemma equals $\sum_{T'':S+i_0
\to S+i_0} \sgn(T'') w(T'')$ where $W''$ runs over the trek systems with
properties (1) and (2). The right-hand side is the sub-sum over all $T''$
without any sided intersection. We construct a sign-changing involution
on the remaining $T''$, as follows. 

Let $k$ be the {\em lowest} vertex on the down part of $t''$ that lies on
the down part of some other trek $u'' \neq t''$ of $T''$. Swapping the
parts of $t''$ and $u''$ below $k$ yields treks $t'''$ and $u'''$ that
still meet at $k$. Let $T'''$ be the trek system obtained from $T''$ by
replacing $t''$ with $t'''$ and $u''$ with $u'''$ (Figure~\ref{fig:id}).

\begin{figure}
\begin{center}
\includegraphics[scale=.7]{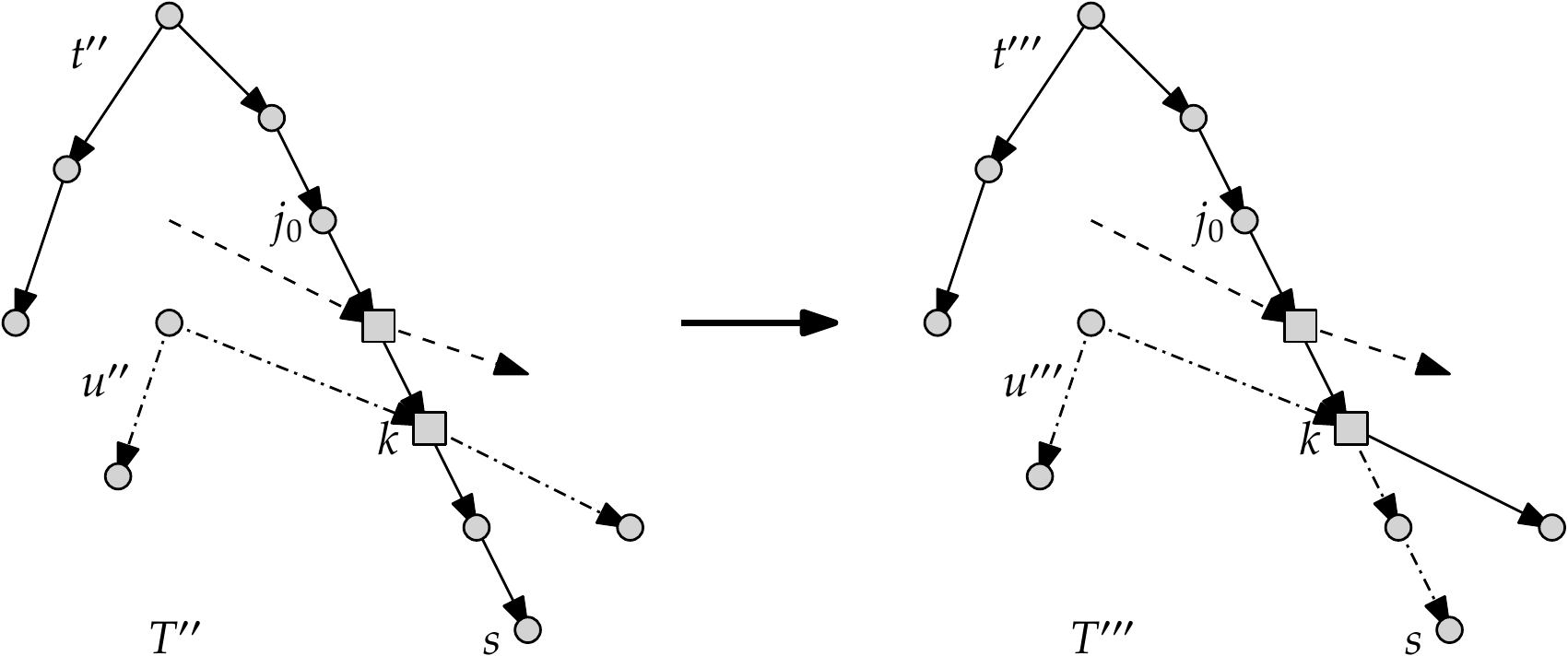}
\caption{The tail swapping argument of Lemma~\ref{lm:id}. The sided
intersections of $t''$ with other treks are depicted as square vertices.}
\label{fig:id}
\end{center}
\end{figure}

The trek system $T'''$ satisfies (1): $t'''$ is its unique trek that
passes $j_0$ on its way down. As for (2): the sided intersections
between $t'''$ and other treks are precisely the sided intersections
between $t''$ and other treks, so they happen below $j_0$. Furthermore,
$u'''$ cannot have sided intersections with treks other than $t'''$,
because those would have come from a sided intersection between $t'''$
and another trek happening below $k$---this is where the choice of $k$
matters. Furthermore, $T''' \setminus \{t''',u'''\}=T'' \setminus
\{t'',u''\}$, so there are no sided intersections between these
treks. This shows that $T'''$ satisfies (2). Also, the map $T'' \to
T'''$ is an involution, since $k$ is the last intersection of the
down part of $t'''$ with any down part of a trek in $T'''$. Since
$\sgn(T''')=-\sgn(T'')$, this shows that the terms on the left-hand
side that do not appear in the right-hand side cancel out. 
\end{proof}

\begin{proof}[Proof of the theorem]
We claim that the zero set of $\cJ$ in
$\RR^D$ is empty. By Lemma~\ref{lm:sj} we may delete from $G$ all
outgoing arrows from elements of $S$ without changing $f$.
Since $i_0 \to j_0$, by Lemma~\ref{lm:i0j0} we have
$\det \Sigma[S+i_0,S+i_0]-g \in \cJ$. The identity in Lemma~\ref{lm:id}
expresses $g$ as a linear combination of the determinants in
Lemma~\ref{lm:i0s} where $s$ runs over the elements of $S$ below
$j_0$. By assumption, for each of these $s$ we have $i_0 \to s$, so
Lemma~\ref{lm:i0s} implies that $g \in \cJ$. Hence $\det \Sigma[S+i_0,S+i_0]
\in \cJ$. But for any set of real parameters $a \in \RR^D$ the matrix
$\Sigma[S+i_0,S+i_0]$ is positive definite, hence has a nonzero
determinant. This proves the claim.
\end{proof}

\section{A modest implication for the PC algorithm} \label{sec:Open}

In the edge-removal part of the PC algorithm \cite{Spirtes01} for learning
$G$, in each step we have an undirected graph $H$ whose edge set, if no
error has occurred so far, contains that of $G$. Using the sample
covariance matrix, a partial correlation
$\corr(i_0,j_0|S)$ is then computed
for some triple $i_0,j_0,S$ such that there is an edge $i_0-j_0$ in $H$
and such that $S$ is contained in the $H$-neighbours of $i_0$ or in
the $H$-neighbours of $j_0$. Before this step all partial correlations
with sets $S'$ of cardinality smaller than that of $S$ have already been
checked. If the absolute value of the partial correlation is less than
some prescribed $\lambda$, then the edge $i_0-j_0$ is removed from $H$.

Our theorem suggests that it might be advantageous to perform this
check first for sets $S$ contained in the {\em intersection} of the
neighbourhoods of $i_0$ and $j_0$ in $H$. Then, {\em if all the edges
between $i_0,j_0,S$ present in $H$ are also present in the DAG $G$} (with
some orientation), one readily checks that the conditions of the theorem
are satisfied. Hence the volume of $\Tube(\lambda)$ is proportional to
$\lambda$, and the region in the parameter space $\RR^D$ of $G$ where
we would erroneously delete $i_0-j_0$ in this step is small.

There are two obvious issues with this. First, in general it will not
suffice to check $S$ in the intersection of the neighbourhoods of $i_0$
and $j_0$. And second, the condition that all of those edges are indeed
present in $G$ is rather strong. To make better use of our theorem, one
might want to develop a version of the PC algorithm where orientation
steps are intertwined with the edge-deletion steps.

We conclude this paper with two examples.

\begin{figure}
\begin{center}
\includegraphics[scale=.7]{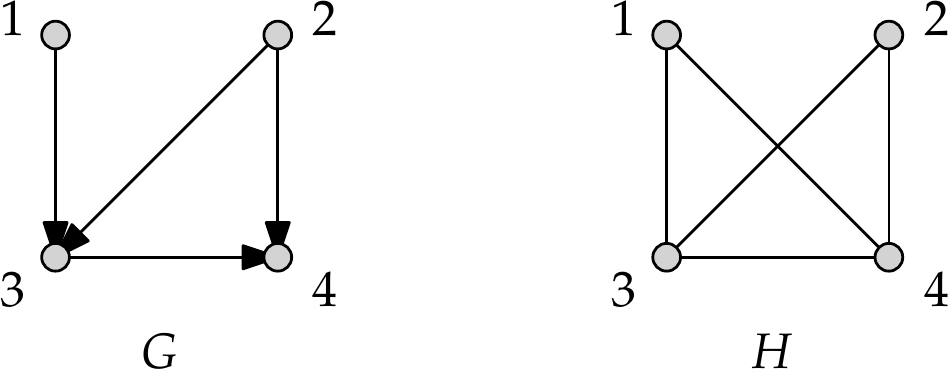}
\label{fig:ex}
\caption{The graphs in Example~\ref{ex:graph}.}
\end{center}
\end{figure}

\begin{ex} \label{ex:graph}
To see that singular partial correlation hypersurfaces cannot be avoided
in the edge removal step of the PC algorithm, consider the graph $G$ in
Figure~\ref{fig:ex}, taken from \cite[Example 4.8]{Uhler14}.
In the beginning, the PC algorithm finds
all nonconditional independencies (so with $S=\emptyset$), and hence
removes the edge $1-2$ to arrive at the graph $H$ on the right. If the
algorithm next chooses to consider the edge $1-4$, then it will delete
this edge after finding that $1,4$ are independent given $3$. However,
by symmetry of $H$ it is equally likely that it will first consider
the edge $1-3$. 

In \cite{Uhler14} it is shown that the partial correlation $f$ with
$i_0=1,j_0=3$ and $S=\{4\}$ has a singular hypersurface $H_f \subseteq
\RR^D$ and that the corresponding $\Tube(\lambda)$ of bad parameter
values is fatter.
\hfill $\clubsuit$
\end{ex}

\begin{ex} \label{ex:volineq}
In addition to the study of correlation hypersurfaces, \cite{Uhler14}
discusses mathematical interpretations of existing heuristics in
statistics. In particular, \cite[Problem 6.2]{Uhler14} discusses a volume
inequality that would confirm the belief that {\em ``collider-stratification
bias tends to attenuate when it arises from more extended paths''}. 
\begin{figure}
\includegraphics[scale=.7]{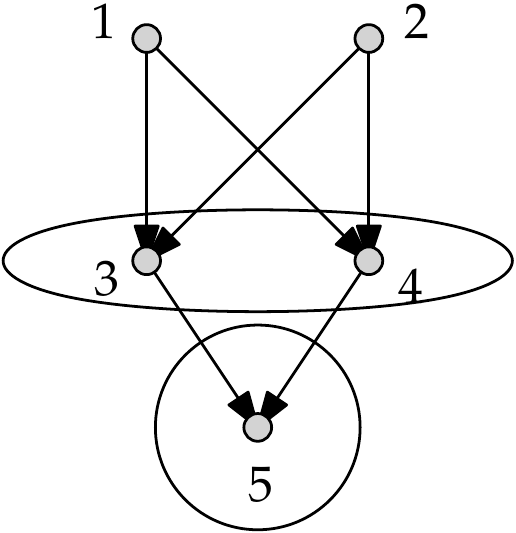}
\caption{The graph from Example~\ref{ex:volineq}.}
\label{fig:volineq}
\end{figure}
In particular, in the situation of Figure~\ref{fig:volineq}, their
conjecture says that
\[ 
\Vol(\{\lambda : |\corr(1,2|5)| \leq \lambda\})
\geq 
\Vol\{\lambda : |\corr(1,2|3,4)| \leq \lambda\}.
\] 
The paper does not explicitly say with respect to which measure $\Vol$
is defined here. If it is supposed to be true for all measures, then
the above is equivalent to
$\corr(1,2|5) \leq \corr(1,2|3,4).$
This is certainly not true in general: taking 
\[ a_{13}=-3, a_{14}=-2, a_{23}=8, a_{24}=10, a_{35}=2,
a_{45}=0 \]
yields $\corr(1,2|5)^2=1024/1189>88/105=\corr(1,2|3,4)^2.$
So formulating this statistical belief as a precise mathematical
conjecture remains a challenge. \hfill $\clubsuit$
\end{ex}


\end{document}